\begin{document}
\title{\textbf{Reliability of three-state $k$-out-of-$n:G$ system with non-homogeneous Markov dependent components}}
\author{Abdelmoumene Boulahia$^{1}$ and Soheir Belaloui$^{2}$
}
\date{Departement of Mathematics, Faculty of exact sciences, University of fr\`eres Mentouri Constantine 1, Algeria\\
Laboratory of Mathematical Modeling and Simulation\\
$^{1}$ abdelmoumen.boulahia@doc.umc.edu.dz\\
$^{2}$ belaloui.soheir@umc.edu.dz
}
\maketitle
\vspace{1 cm}
\begin{abstract}{}
\noindent
In this paper, we study the reliability of a three-state $k$-out-of-$n:G$ system. We consider the situation where the system components are non-homogeneous Markov dependent, and we derive a closed-form formula for the system reliability, including increasing three-state $k$-out-of-$n:G$ system and decreasing three-state $k$-out-of-$n:G$ system. Our study is based on the probability generating function method. Two numerical examples are presented to demonstrate the use of the formula. \\
\textsl{\textbf{Keywords:}} $k$-out-of-$n:G$ system, Markov dependent components, probability generating function, system reliability.\\
\end{abstract}
\begin{center}
\textbf{NOTATION} 
\end{center}
\begin{tabular}{ll}
$n$       & number of components in the system. \\
$X_u$     & state of component $u$, $X_u\in\left\{0,1,2\right\}$, $u=1,2,...,n$.\\
$R^j(n)$  & probability that the three-state $k$-out-of-$n:G$ system is in state $j$ or above, $j=1,2$.\\
$r^j(n)$  & probability that the three-state $k$-out-of-$n:G$ system is in state $j$, $j=0,1,2$.\\
$\mathcal{I}_a(b)$& indicator function of a number $a$ ($\mathcal{I}_a(b)=1$ if $a\leq b$ and $0$ otherwise).\\
$N_{n,j}$ & the total number of components that are in state $j$ or above ($j=1,2$).\\
$\Psi_{n,j}(t)$&  the probability generating function of $N_{n,j}$.\\
$\Gamma(t_1,t_2)$& the probability generating function of $\left(N_{n,1},N_{n,2}\right).$
\end{tabular}
\begin{section}{Introduction}
In the context of binary systems, both the system and its components have only two states: working state  and failed state. The binary $k$-out-of-$n:G(F)$ system was introduced by Birnbaum et al. \cite{birnbaum61}. The system consists  of $n$ components; each component has two possible states, working or failed, and it functions (fails) if the total number of working (failed) components is at least $k$. The binary $k$-out-of-$n:G(F)$ system has been extensively studied by researchers such as in \cite{boland83proschan, sarje89, higashiyama2001, khatab2009, mahmoud2014ery}, and others. 
In the multi-state systems, the system and its components can be in $M+1$ possible states 0,1,...,M. The multi-state systems can model several real life systems that cannot be modeled by the binary systems.  

The first extension of the binary $k$-out-of-$n:G$ system to the case of multi-states is given by  El-Neweihi et al. \cite{elnewei1978}; the system is in state $j$ or above if at least $k$ components are in state $j$ or above, which means that the system has the same structure for every level of system states. Huang et al. \cite{huang2000generalized} proposed a definition of a generalized multi-state $k$-out-of-$n:G$ system as follows: The system is in state $j$ or above if there exists an integer value $l(j\leq l\leq M)$ such that at least $k_l$ components are in states at least as good as $l$.
In this definition, the $k_j$ does not have to be the same for different system states $j(1\leq j\leq M)$. This implies that the structure of the system may vary depending on the system-state levels. The generalized multi-state $k$-out-of-$n:G$ system can be used to model lines of products in plants \cite{huang2000generalized}, and power stations \cite{zuo2006tian}. Huang et al. \cite{huang2000generalized} considered two special cases of the definition of the multi-state $k$-out-of-$n:G$ system. When $k_1\leq k_2 \leq ...\leq k_M$, the system was called: increasing multi-state $k$-out-of-$n:G$ system, and when $k_1\geq k_2 \geq ...\geq k_M$ with at least one strict inequality, the system was called: decreasing multi-state $k$-out-of-$n:G$ system. Zou and Tian \cite{zuo2006tian} proposed a generalized multi-state $k$-out-of-$n:F$ system and gave the relationship between multi-state $k$-out-of-$n:G$ system and corresponding multi-state $k$-out-of-$n:F$ system.

In this paper, we consider a three-state $k$-out-of-$n:G$ system; both the system and its components can be in three possible states: state $2$ perfect functioning, state $1$ partial  working, and state $0$ complete failure. According to the definition given by  Huang et al. \cite{huang2000generalized}, a three-state $k$-out-of-$n:G$ system is in state $2$ if at least $k_2$ components are in state $2$, and the system is in state $1$ or above if at least $k_1$ components are in state $1$ or above, or at least $k_2$ components are in state $2$.  
Let $X_u$ be the random variable representing the state of component $u$ ($X_u=2$ if component $u$ is in perfect functioning, $X_u=1$ if component $u$ is in partially working, and $X_u=0$ if component $u$ is in complete failure), $u=1,2,...,n$. Assume that the system components are non-homogeneous Markov dependent. In other words, the state of  any component in the system depends only on the state of its preceding component and does not depend on the states of the other components. Mathematically 
\begin{eqnarray}
	Pr\left\{X_i=l_i|X_1=l_1,...,X_{i-1}=l_{i-1},X_{i+1}=l_{i+1},...,X_n=l_n\right\}&=&Pr\left\{X_i=l_i|X_{i-1}=l_{i-1}\right\}\nonumber \\
&=&p_i^{l_{i-1}l_i}
	\label{mdcpt}
\end{eqnarray}
for $i=1,2,...,n$ and $l_1,l_2,...,l_n \in \left\{0,1,2\right\}$.

We use the probability generating function method to evaluate the reliability of a three-state $k$-out-of-$n:G$ system consisting of non-homogeneous Markov dependent components.
This method has been used in many papers to study system reliability, marginal reliability importance, and joint reliability importance \cite{kamalja2012, kamalja2014comp, zhu2015joint, zhu2017lout, monarticle}. 
All these study were done in the case of binary systems. From what is in the literature and our knowledge, we think that the approach of the probability generating function was not used in the multi-state systems, which led us to invest and focus on this specific case and especially the three-state $k$-out-of-$n:G$ system.  

The main contribution of this work is to study the state distributions of a three-state $k$-out-of-$n:G$ system, which has not been studied before when the system components are non-homogeneous Markov dependent.

We structure the remainder of the paper as follows: in Section 2, we present a formula for computing the state distributions of an increasing three-state $k$-out-of-$n:G$ system and decreasing three-state $k$-out-of-$n:G$ system, respectively. In Section 3, we present two numerical examples to demonstrate the use of the formula. Finally, in Section 4, we give the conclusion of the paper. 
 
\end{section}
\setlength{\parindent}{0ex}
\begin{section}{Reliability evaluation}
\begin{subsection}{Increasing three-state $k$-out-of-$n:G$ system} 
\setlength{\parindent}{0ex}
Note that the increasing three-state $k$-out-of-$n:G$ system $(k_1\leq k_2)$ includes the constant three-state $k$-out-of-$n:G$ system $(k_1=k_2)$. In this case, the definition of a three-state $k$-out-of-$n:G$ system can be rephrased as follows: the system is in state $j$ or above iff at least $k_j$ components are in state $j$ or above.\\  
Let $X_u$ be the random variable representing the state of component $u, u=1,2,...,n$, where $X_u=2$ if component $u$ is in perfect functioning, $X_u=1$ if component $u$ is in partially working, and $X_u=0$ if component $u$ is in total failure. Define $N_{n,j}$ the total number of components that are in state $j$ or above ($j=1,2$). Then, the probability that an increasing three-state $k$-out-of-$n:G$ system is in state $j$ or above, $R^{j}(n)$, is
\begin{equation}
	R^j(n)=Pr\left\{N_{n,j}\geq k_j\right\}
	\label{Rj:inc}
\end{equation}
Let $\Psi_{n,j}(t)$ be the probability generating function of a distribution of $N_{n,j}$. Then
\begin{equation}
	\Psi_{n,j}(t)=E\left(t^{N_{n,j}}\right)=\sum_{x=0}^{n}{Pr\left\{N_{n,j}=x\right\}t^x}
	\label{pgf:inc}
\end{equation}
Thus, the probability that the system is in state $j$ or above can be obtained from the probability generating function, $\Psi_{n,j}(t)$, by the summation of coefficients of $t^x$ with $x\geq k_j$.\\
We first derive the expression of $\Psi_{n,j}(t)$ in Theorem \ref{th:inc}. Then, we use it to derive the formula of $R^j(n)$ for an increasing  three-state $k$-out-of-$n:G$ system in Proposition \ref{pro:inc}. 
\newtheorem{theo}{Theorem}
\begin{theo}
For an increasing three-state $k$-out-of-$n:G$ system with non-homogeneous Markov dependent components
\begin{equation*}
\Psi_{n,j}(t)=\bm{\bar{1}}\prod_{c=1}^{n}\bm{H}_c^j(t)\bm{1}, \; \; \mbox{for}\; j=1,2.
\end{equation*}
 Where $\bm{H}_c^j(t)$ is a $(3\times 3)$-matrix  for $c=1,2,...,n$ and $j=1,2$ as 
$$\bm{H}_c^j(t)=\left(\begin{array}{lll}
p_c^{00}  & p_c^{01}t^{\mathcal{I}_j(1)}  & p_c^{02}t^{\mathcal{I}_j(2)}  \\
p_c^{10}  & p_c^{11}t^{\mathcal{I}_j(1)}  & p_c^{12}t^{\mathcal{I}_j(2)}  \\
p_c^{20}  & p_c^{21}t^{\mathcal{I}_j(1)}  & p_c^{22}t^{\mathcal{I}_j(2)}  \\
\end{array}\right)$$
and $\bm{\bar{1}}=(1\;0\;0)$, $\bm{1}=(1\;1\;1)^{'}$.
\label{th:inc}
\end{theo}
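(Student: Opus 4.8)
The plan is to obtain $\Psi_{n,j}(t)$ from a forward (transfer-matrix) recursion along the chain $X_1,X_2,\dots,X_n$ that mirrors the way $N_{n,j}=\sum_{u=1}^{n}\mathcal I_j(X_u)$ accumulates one component at a time. Fix $j\in\{1,2\}$. For $c=0,1,\dots,n$ and $s\in\{0,1,2\}$ introduce the partial generating functions
$$v_c^s(t)=E\!\left(t^{\,\sum_{u=1}^{c}\mathcal I_j(X_u)}\,\mathbf 1_{\{X_c=s\}}\right),$$
collected into the row vector $\bm v_c(t)=(v_c^0(t),v_c^1(t),v_c^2(t))$. A phantom component $X_0$ is fixed in state $0$ (this is exactly the reading of \eqref{mdcpt} for $i=1$, where $p_1^{0\,l_1}$ then supplies the marginal law of $X_1$), so that $N_{0,j}=0$ and $\bm v_0(t)=(1,0,0)=\bm{\bar1}$. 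Because $\sum_{s}v_n^s(t)=E(t^{N_{n,j}})$, the theorem will follow once we show $\bm v_n(t)=\bm{\bar1}\prod_{c=1}^{n}\bm H_c^j(t)$ and then post-multiply by $\bm 1$.

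The core step is the one-step identity $\bm v_c(t)=\bm v_{c-1}(t)\,\bm H_c^j(t)$. To prove it, condition on $X_{c-1}$ and use the Markov dependence \eqref{mdcpt}: given $X_{c-1}=s'$, the event $\{X_c=s\}$ is independent of $(X_1,\dots,X_{c-1})$, and $\sum_{u=1}^{c}\mathcal I_j(X_u)=\big(\sum_{u=1}^{c-1}\mathcal I_j(X_u)\big)+\mathcal I_j(X_c)$, whence
$$v_c^s(t)=\sum_{s'=0}^{2}E\!\left(t^{\,\sum_{u=1}^{c-1}\mathcal I_j(X_u)}\mathbf 1_{\{X_{c-1}=s'\}}\right)p_c^{s's}\,t^{\mathcal I_j(s)}=\sum_{s'=0}^{2}v_{c-1}^{s'}(t)\,\big(\bm H_c^j(t)\big)_{s',s}.$$
Here one reads off that $\big(\bm H_c^j(t)\big)_{s',s}=p_c^{s's}\,t^{\mathcal I_j(s)}$ is precisely the displayed matrix: column $s=0$ carries $t^{\mathcal I_j(0)}=t^{0}=1$ (since $\mathcal I_j(0)=0$ for $j\in\{1,2\}$), while columns $s=1,2$ carry $t^{\mathcal I_j(1)}$ and $t^{\mathcal I_j(2)}$, so the increment contributed by component $c$ landing in state $s$ is recorded exactly by the exponent $\mathcal I_j(s)$. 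Iterating from $\bm v_0(t)=\bm{\bar1}$ gives $\bm v_n(t)=\bm{\bar1}\prod_{c=1}^{n}\bm H_c^j(t)$, and $\Psi_{n,j}(t)=\bm v_n(t)\bm 1=\bm{\bar1}\prod_{c=1}^{n}\bm H_c^j(t)\,\bm 1$. The whole argument is uniform in $j$, the only dependence on $j$ residing in the exponents $\mathcal I_j(1),\mathcal I_j(2)$, so both cases $j=1,2$ are covered at once.

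The computation itself is routine bookkeeping of products of $3\times3$ matrices; the one point that needs care — more a matter of stating it cleanly than of technical difficulty — is the boundary convention at $c=1$: making explicit that $p_1^{l_0 l_1}$ is to be read with $X_0\equiv0$, equivalently that $\Pr(X_1=l)=p_1^{0l}$, which is exactly what makes $\bm v_0(t)=(1,0,0)$ the right initialization and what forces the pre-multiplication by $\bm{\bar1}$ (selection of the first row of $\bm H_1^j(t)$). Once that convention is fixed, the verification of the base case $\bm v_1(t)=\bm{\bar1}\,\bm H_1^j(t)$ and of the inductive step above completes the proof.
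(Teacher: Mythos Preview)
Your proof is correct but runs the recursion in the opposite direction from the paper. You define \emph{forward} quantities $v_c^s(t)=E\big(t^{\sum_{u\le c}\mathcal I_j(X_u)}\mathbf 1_{\{X_c=s\}}\big)$, initialize with the row vector $\bm{\bar 1}$, and right-multiply by $\bm H_c^j(t)$ at each step, summing with $\bm 1$ at the end. The paper instead defines \emph{backward} conditional generating functions $\Psi_{c,j}^{\alpha}(t)$ for the tail $X_{c+1},\dots,X_n$ given $X_c=\alpha$, collects them in a column vector, shows $\bm\Psi_{c-1,j}(t)=\bm H_c^j(t)\bm\Psi_{c,j}(t)$, terminates with $\bm\Psi_{n,j}(t)=\bm 1$, and reads off $\Psi_{n,j}(t)=\bm{\bar 1}\bm\Psi_{0,j}(t)$. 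The two are exactly dual transfer-matrix arguments and yield the identical product formula; your forward version makes the role of the convention $X_0\equiv 0$ and the initialization $\bm{\bar 1}$ more transparent, while the paper's backward version makes the terminal condition $\bm 1$ (no components left, so the conditional PGF is $1$) the obvious boundary. Neither gains or loses anything substantive over the other.
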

\begin{proof}[\textbf{Proof}]
Consider an increasing three-state $k$-out-of-$n:G$ system. Assume that the system components are non-homogeneous Markov dependent. Let $\Psi_{n,j}(t)$ be the probability generating function of distribution of $N_{n,j}$ in a sequence of components whose states are $X_1,X_2,...,X_n$. Define $\Psi_{c,j}^{\alpha}(t)$ the probability generating function of the total number of components that are in state $j$ or above in $X_{c+1},X_{c+2},...,X_n$ given that $X_c=\alpha$, $\alpha=0,1,2$. Correspondingly, define column vector $\bm{\Psi}_{c,j}(t)=\left(\Psi_{c,j}^{0}(t),\Psi_{c,j}^{1}(t),\Psi_{c,j}^{2}(t)\right)^{'}$.\\
Assume that $Pr\left\{X_0=0\right\}=1$ and conditioning on $X_0$, we have
\begin{eqnarray}
	\Psi_{n,j}(t)&=&E\left(t^{N_{n,j}|X_0=0}\right)Pr\left\{X_0=0\right\}+E\left(t^{N_{n,j}|X_0\neq 0}\right)Pr\left\{X_0\neq 0\right\} \nonumber \\
	             &=&E\left(t^{N_{n,j}|X_0=0}\right) \nonumber \\
	             &=&\Psi_{0,j}^0(t) \nonumber \\
							 &=&\bm{\bar{1}}\bm{\Psi}_{0,j}(t)
							\label{psi:0}
\end{eqnarray}
Conditioning on the state of component $c$, for $c=1,2,...,n-1$, we obtain 
\begin{equation}
\Psi_{c-1,j}^\alpha(t)=\left\{\begin{array}{ll}
p_c^{00}\Psi_{c,j}^{0}(t)+p_c^{01}\Psi_{c,j}^{1}(t)t^{\mathcal{I}_j(1)}+p_c^{02}\Psi_{c,j}^{2}(t)t^{\mathcal{I}_j(2)}  & \mbox{for} \; \alpha=0 \\ \\
p_c^{10}\Psi_{c,j}^{0}(t)+p_c^{11}\Psi_{c,j}^{1}(t)t^{\mathcal{I}_j(1)}+p_c^{12}\Psi_{c,j}^{2}(t)t^{\mathcal{I}_j(2)}  & \mbox{for} \;\alpha=1 \\\\
p_c^{20}\Psi_{c,j}^{0}(t)+p_c^{21}\Psi_{c,j}^{1}(t)t^{\mathcal{I}_j(1)}+p_c^{22}\Psi_{c,j}^{2}(t)t^{\mathcal{I}_j(2)}  & \mbox{for} \; \alpha=2 
\end{array}\right.
\label{receqinc}
\end{equation}
The above relations in (\ref{receqinc}) for $c=1,2,...,n-1$ can be expressed  as 
\begin{equation}
\bm{\Psi}_{c-1,j}(t)=\bm{H}_c^j(t)\bm{\Psi}_{c,j}(t)	
\end{equation}
 Therefore, $\bm{\Psi}_{0,j}(t)=\left(\prod_{c=1}^{n}{\bm{H}_c^j(t)}\right)\bm{\Psi}_{n,j}(t)$. For $c=n$, we have $\Psi_{n,j}^{\alpha}(t)=1$ for $\alpha=0,1,2$, that is, $\bm{\Psi}_{n,j}(t)=\bm{1}$. Thus, by equation (\ref{psi:0}), the result follows.
\end{proof}
For a subset of components $S\subseteq\left\{1,2,...,n\right\}$, define matrix $\bm{G}_{u,S}^j$ as 
\begin{equation}
	G_{u,S}^j=\left\{\begin{array}{ll}
	\bm{H}_{u}^j(0)                 &  \mbox{if}\; u\notin S\\ \\
	\bm{H}_{u}^j(1)-\bm{H}_{u}^j(0) &  \mbox{if}\; u\in S
	\end{array}\right.
	\label{GuS:inc}
\end{equation}
\newtheorem{pro}{Proposition}
\begin{pro}
The probability that an increasing three-state $k$-out-of-$n:G$ system with non-homogeneous Markov dependent components is in state $j$ or above, $R^j(n)$, $j=1,2$ is
\begin{equation*}
	R^j(n)=\sum_{S:|S|\geq k_j}{\bm{\bar{1}}\prod_{u=1}^{n}{G_{u,S}^j}\bm{1}}
\end{equation*}
where the sum is taken over all the subsets whose sizes are greater than or equal $k_j$. 
\label{pro:inc}
\end{pro}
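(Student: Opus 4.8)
The plan is to feed the generating-function identity of Theorem~\ref{th:inc} into the coefficient-extraction recipe already stated just after (\ref{pgf:inc}): by (\ref{Rj:inc}) we have $R^j(n)=Pr\{N_{n,j}\geq k_j\}$, and by (\ref{pgf:inc}) this is the sum of the coefficients of $t^x$ in $\Psi_{n,j}(t)=\bm{\bar{1}}\prod_{c=1}^{n}\bm{H}_c^j(t)\bm{1}$ over all $x\geq k_j$. So it suffices to read off those coefficients from the matrix product, i.e. to show $Pr\{N_{n,j}=x\}=\sum_{S:|S|=x}\bm{\bar{1}}\prod_{u=1}^{n}G_{u,S}^j\,\bm{1}$ and then sum over $x=k_j,\dots,n$.

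The one structural fact that makes this work is that $\bm{H}_c^j(t)$ is \emph{affine} in $t$. Indeed, for $j=1$ we have $\mathcal{I}_1(1)=\mathcal{I}_1(2)=1$, and for $j=2$ we have $\mathcal{I}_2(1)=0$, $\mathcal{I}_2(2)=1$, so in both cases every entry of $\bm{H}_c^j(t)$ has degree at most $1$ in $t$; hence $\bm{H}_c^j(t)=\bm{H}_c^j(0)+t\bigl(\bm{H}_c^j(1)-\bm{H}_c^j(0)\bigr)$. I would then expand the ordered matrix product: since the scalar $t$ commutes with everything, choosing for each factor either the constant part $\bm{H}_c^j(0)$ (when $c\notin S$) or the linear part $\bm{H}_c^j(1)-\bm{H}_c^j(0)$ (when $c\in S$), and keeping the factors in the natural order $u=1,2,\dots,n$, gives, with the notation (\ref{GuS:inc}),
\[
\prod_{c=1}^{n}\bm{H}_c^j(t)=\sum_{S\subseteq\{1,2,\dots,n\}}t^{|S|}\prod_{u=1}^{n}G_{u,S}^j .
\]
Premultiplying by $\bm{\bar{1}}$, postmultiplying by $\bm{1}$, and grouping the subsets by cardinality yields
\[
\Psi_{n,j}(t)=\sum_{x=0}^{n}\Bigl(\sum_{S:\,|S|=x}\bm{\bar{1}}\prod_{u=1}^{n}G_{u,S}^j\,\bm{1}\Bigr)t^{x},
\]
so $Pr\{N_{n,j}=x\}=\sum_{S:|S|=x}\bm{\bar{1}}\prod_{u=1}^{n}G_{u,S}^j\,\bm{1}$, and summing over $x\geq k_j$ produces $R^j(n)=\sum_{S:|S|\geq k_j}\bm{\bar{1}}\prod_{u=1}^{n}G_{u,S}^j\,\bm{1}$.

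There is no deep obstacle here; the real content is already carried by Theorem~\ref{th:inc}, and the increasing-system hypothesis $k_1\leq k_2$ enters only through (\ref{Rj:inc}). The two points I would be careful about are (i) the affine-linearity check — i.e. confirming that $\mathcal{I}_j(1),\mathcal{I}_j(2)\in\{0,1\}$ for $j=1,2$, which is exactly what prevents any $t^{2}$ (or higher) term from appearing in the entries of $\bm{H}_c^j(t)$ and hence makes the subset expansion valid; and (ii) the bookkeeping in the non-commutative product, ensuring that in each term the factors $G_{u,S}^j$ are multiplied in the index order $1,2,\dots,n$ so as to match $\prod_{c=1}^{n}\bm{H}_c^j(t)$. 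Beyond these, the argument is a purely formal expansion followed by comparison of coefficients.
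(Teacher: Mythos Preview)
Your proposal is correct and follows exactly the paper's approach: use (\ref{Rj:inc})--(\ref{pgf:inc}) to reduce to coefficient extraction, note the affine decomposition $\bm{H}_c^j(t)=\bm{H}_c^j(0)+t\bigl(\bm{H}_c^j(1)-\bm{H}_c^j(0)\bigr)$, expand the ordered product over subsets via (\ref{GuS:inc}), and sum over $|S|\geq k_j$. The paper's own proof is a two-line sketch of precisely this; your version simply makes the subset expansion and the affine-linearity check explicit.
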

\begin{proof}[\textbf{Proof}]  
By equations (\ref{Rj:inc}) and (\ref{pgf:inc}), $R^j(n)$, is the sum of the coefficients of $t^x$ for $x\geq k_j$ in $\Psi_{n,j}(t)=\sum_{x=0}^{n}{Pr\left\{N_{n,j}=x\right\}t^x}.$\\
Note that $\bm{H}_c^j(t)=\bm{H}_c^j(0)+\left(\bm{H}_c^j(1)-\bm{H}_c^j(0)\right)t.$ By Theorem \ref{th:inc} and the definition of $\bm{G}_{u,S}^j$ in equation (\ref{GuS:inc}), the expression of $R^j(n)$ follows.
\end{proof}
After calculating $R^1(n)$ and $R^2(n)$. We can use the following equations to get the probability that the system is in state $j$, $r^j(n)$, $j=0,1,2.$
\begin{eqnarray*}
	r^2(n)&=&R^2(n)\\
	r^1(n)&=&R^1(n)-R^2(n)\\
	r^0(n)&=&1-R^1(n)\\
\end{eqnarray*}
\newtheorem{remarque}{Remark}
\begin{remarque}
If the system components are homogeneous Markov dependent, that is $p_u^{l_{u-1}l_u}=p^{l_{u-1}l_u}$ for all $u=1,2,...,n$. Then, matrix $\bm{H}_u^j(t)=\bm{H}^j(t)$ for all $u$. Thus, the system reliability is specified by Proposition \ref{pro:inc} for the case of homogeneous Markov dependent components.
\end{remarque}    

\end{subsection}
\begin{subsection}{Decreasing three-state $k$-out-of-$n:G$ system} 
In this case, the definition of a system can be rephrased as follows: the system is in state $2$ iff there are at least $k_2$ components  in state $2$, and the system is in state $1$ iff there are at least $k_1$ components in state $1$ or above and there are at most $k_2-1$ components  in state $2$.\\
The probability that a decreasing three-state $k$-out-of-$n:G$ system is in state $1$, $r^1(n)$, in terms of $N_{n,1}$ and $N_{n,2}$ is 
\begin{equation}
	r^1(n)=Pr\left\{N_{n,1}\geq k_1,N_{n,2}< k_2\right\}
	\label{r1:dec}
\end{equation}
The probability that a decreasing three-state $k$-out-of-$n:G$ system is in state $2$, $r^2(n)$, in terms of $N_{n,2}$ is
\begin{equation}
	r^2(n)=Pr\left\{N_{n,2}\geq k_2\right\}.
	\label{r2:dec}
\end{equation}
 Let $\Gamma(t_1,t_2)$ be the probability generating function of joint distribution of $\left(N_{n,1},N_{n,1}\right)$ in a sequence of components whose states are $X_1,X_2,...,X_n$. Then
\begin{equation}
	\Gamma(t_1,t_2)=E\left(t_1^{N_{n,1}}t_2^{N_{n,2}}\right)=\sum_{x=0}^{n}{}\sum_{y=0}^{n}{Pr\left\{N_{n,1}=x,N_{n,2}=y\right\}t_1^xt_2^y}
	\label{pgf:dec}
\end{equation}
Thus, the probability that a decreasing three-state $k$-out-of-$n:G$ system is in state $j$ can be obtained from the probability generating function $\Gamma(t_1,t_2)$, which is given by Theorem \ref{pgf2}.
\label{th:dec}
\begin{theo} \label{pgf2}
For a decreasing three-state $k$-out-of-$n:G$ system with non-homogeneous Markov dependent components
\begin{equation*}
\Gamma(t_1,t_2)=\bm{\bar{1}}\prod_{c=1}^{n}\bm{H}_c(t_1,t_2)\bm{1}
\end{equation*}
Where, $\bm{\bar{1}}=(1\;0\;0)$, $\bm{1}=(1\;1\;1)$, and $\bm{H}_c(t_1,t_2)$ is a $(3\times 3)$-matrix of order $3$ for $c=1,2,...,n$ as $$\bm{H}_c(t_1,t_2)=\left(\begin{array}{lll}
p_c^{00} & p_c^{01}t_1 & p_c^{02}t_1t_2 \\
p_c^{10} & p_c^{11}t_1 & p_c^{12}t_1t_2 \\
p_c^{20} & p_c^{21}t_1 & p_c^{22}t_1t_2 \\
\end{array}
\right)$$
\label{th:dec}
\end{theo}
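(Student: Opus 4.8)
The plan is to adapt, almost verbatim, the recursive conditioning argument from the proof of Theorem~\ref{th:inc}, the only new feature being that two counters must be tracked simultaneously. For $\alpha\in\{0,1,2\}$, I would first define $\Gamma_c^\alpha(t_1,t_2)$ to be the joint probability generating function of the pair (number of components among $X_{c+1},\dots,X_n$ in state $1$ or above, number of components among $X_{c+1},\dots,X_n$ in state $2$), conditional on $X_c=\alpha$, and assemble these into the column vector $\bm{\Gamma}_c(t_1,t_2)=\left(\Gamma_c^0(t_1,t_2),\Gamma_c^1(t_1,t_2),\Gamma_c^2(t_1,t_2)\right)^{'}$. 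Introducing a fictitious component with $Pr\{X_0=0\}=1$ and conditioning on $X_0$ exactly as in (\ref{psi:0}) gives $\Gamma(t_1,t_2)=\Gamma_0^0(t_1,t_2)=\bm{\bar{1}}\,\bm{\Gamma}_0(t_1,t_2)$.

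Next I would establish the one-step recursion. Conditioning on the state of component $c$ and invoking the non-homogeneous Markov dependence (\ref{mdcpt}) — so that, given $X_{c-1}=\alpha$, component $c$ is in state $0$, $1$, $2$ with probabilities $p_c^{\alpha 0}$, $p_c^{\alpha 1}$, $p_c^{\alpha 2}$, and the conditional law of $(X_{c+1},\dots,X_n)$ then depends on the history only through $X_c$ — one reads off the contribution of component $c$ to the two counters: state $0$ contributes nothing, state $1$ increments only $N_{n,1}$ (hence a factor $t_1$), and state $2$ increments both $N_{n,1}$ and $N_{n,2}$ (hence a factor $t_1 t_2$). This yields, for $c=1,\dots,n$,
\[
\Gamma_{c-1}^\alpha(t_1,t_2)=p_c^{\alpha 0}\Gamma_c^0(t_1,t_2)+p_c^{\alpha 1}t_1\,\Gamma_c^1(t_1,t_2)+p_c^{\alpha 2}t_1 t_2\,\Gamma_c^2(t_1,t_2),
\]
which is exactly $\bm{\Gamma}_{c-1}(t_1,t_2)=\bm{H}_c(t_1,t_2)\,\bm{\Gamma}_c(t_1,t_2)$ with $\bm{H}_c(t_1,t_2)$ as in the statement.

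Iterating from $c=1$ to $c=n$ gives $\bm{\Gamma}_0(t_1,t_2)=\left(\prod_{c=1}^{n}\bm{H}_c(t_1,t_2)\right)\bm{\Gamma}_n(t_1,t_2)$, and since there are no components after $X_n$ both counters vanish, so $\Gamma_n^\alpha(t_1,t_2)=1$ for every $\alpha$, that is, $\bm{\Gamma}_n(t_1,t_2)=\bm{1}$. Substituting this into $\Gamma(t_1,t_2)=\bm{\bar{1}}\,\bm{\Gamma}_0(t_1,t_2)$ proves the theorem. I expect the only genuine subtlety to be the exponent bookkeeping in the recursion — in particular remembering that a state-$2$ component must carry the factor $t_1 t_2$ rather than $t_2$, because being in state $2$ also means being in state $1$ or above — together with a clean use of the Markov property to justify that peeling off component $c$ requires conditioning on $X_c$ alone; after that the matrix algebra is routine and identical in form to that in the proof of Theorem~\ref{th:inc}.
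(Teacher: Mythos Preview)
Your proposal is correct and follows essentially the same argument as the paper's own proof: define the conditional joint generating functions (the paper calls them $\Phi_c^\alpha$ rather than $\Gamma_c^\alpha$), derive the one-step recursion by conditioning on $X_c$, rewrite it as $\bm{\Phi}_{c-1}=\bm{H}_c\bm{\Phi}_c$, iterate, and use the boundary conditions $\bm{\Phi}_n=\bm{1}$ and $\Gamma=\bm{\bar 1}\bm{\Phi}_0$. Your explicit remark that state~$2$ must contribute the factor $t_1t_2$ and your careful invocation of the Markov property are exactly the points the paper leaves implicit, so if anything your write-up is slightly more complete.
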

\begin{proof}[\textbf{Proof}]
Consider a decreasing three-state $k$-out-of-$n:G$ system with non-homogeneous Markov dependent components. For integer $c=0,1,...,n$, let $\Phi_c^{\alpha}(t_1,t_2)$ be the probability generating function of the total number of components that are in state $1$ or above and the total number of components that are in state $2$ in $X_{c+1},X_{c+2},...,X_{n}$, given that $X_c=\alpha$, $\alpha=0,1,2$. Correspondingly define column victor $$\bm{\Phi}_c(t_1,t_2)=\left(\Phi_c^0(t_1,t_2),\; \Phi_c^1(t_1,t_2),\; \Phi_c^2(t_1,t_2) \right)^{'}$$
Assume that $Pr\left\{X_0=0\right\}=1$. Then
\begin{equation}
\Gamma(t_1,t_2)=\Phi_0^0(t_1,t_2)=\bm{\bar{1}}\bm{\Phi}_0(t_1,t_2)
\label{gamdec0}
\end{equation}
Conditioning on the state of component $c$, for $c = 1,2,...,n-1$, we obtain 
\begin{equation}
\Phi_{c-1}^\alpha(t)=\left\{\begin{array}{ll}
p_c^{00}\Phi_{c}^0(t_1,t_2)+p_c^{01}\Phi_{c}^{1}(t_1,t_2)t_1+p_c^{02}\Phi_{c}^2(t_1,t_2)t_1t_2  & \mbox{for} \; \alpha=0 \\ \\
p_c^{10}\Phi_{c}^0(t_1,t_2)+p_c^{11}\Phi_{c}^{1}(t_1,t_2)t_1+p_c^{12}\Phi_{c}^2(t_1,t_2)t_1t_2  & \mbox{for} \;\alpha=1 \\\\
p_c^{20}\Phi_{c}^0(t_1,t_2)+p_c^{21}\Phi_{c}^{1}(t_1,t_2)t_1+p_c^{22}\Phi_{c}^2(t_1,t_2)t_1t_2  & \mbox{for} \; \alpha=2 
\end{array}\right.
\label{receqdec}
\end{equation}
the above relations in (\ref{receqdec}) for $c=1,2,...,n-1$ can be expressed as
\begin{equation}
	\bm{\Phi}_{c-1}(t_1,t_2)=\bm{H}_c(t_1,t_2)\bm{\Phi}_c(t_1,t_2)
\end{equation}
Therefore, $\bm{\Phi}_0(t_1,t_2)=\left(\prod_{c=1}^{n}{\bm{H}_c(t_1,t_2)} \right) \bm{\Phi}_n(t_1,t_2)$. For $c=n$, we have $\Phi_n^{\alpha}(t_1,t_2)=1$ for $\alpha=0,1,2$, that is, $\bm{\Phi}_n(t_1,t_2)=\bm{1}$. Thus by equation (\ref{gamdec0}), the expression of $\Gamma(t_1,t_2)$ follows. 
\end{proof}
To derive a closed-form formula of $r^j(n)$, $j=1,2$, define matrix $\bm{G}_{u,S_1,S_2}$ for two subsets of components, $S_1$ and $S_2$ $\subseteq \left\{1,2,...,n\right\} $, as 
\begin{equation}
\bm{G}_{u,S_1,S_2}=\left\{\begin{array}{ll}
\bm{H}_u(0,0)                 &	\mbox{if}\; u\notin S_1 \;\mbox{and}\; u\notin S_2 \\
\bm{H}_u(1,0)-\bm{H}_u(0,0)   & \mbox{if}\; u\in S_1 \;\mbox{and}\; u\notin S_2 \\
\bm{0}_{3\time 3}            & \mbox{if}\; u\notin S_1 \;\mbox{and}\; u\in S_2 \\
\bm{H}_u(1,1)-\bm{H}_u(1,0)   & \mbox{if}\; u\in S_1 \;\mbox{and}\; u\in S_2 
\end{array}\right.
\label{GuSU:dec}
\end{equation}
\begin{pro}
The probability that a decreasing three-state $k$-out-of-$n : G$ system with non-homogeneous Markov dependent components is in state $j$, $r^j(n)$, $j= 0,1,2$ is
\begin{equation*}
	r^2(n)=\sum_{S_1:|S_1|\leq n}\sum_{S_2:|S_2|\geq k_2}{\bm{\bar{1}}\prod_{u=1}^{n}{\bm{G}_{u,S_1,S_2}}\bm{1}},
\end{equation*}
\begin{equation*}
	r^1(n)=\sum_{S_1:|S_1|\geq k_1}\sum_{S_2:|S_2|<k_2}{\bm{\bar{1}}\prod_{u=1}^{n}{\bm{G}_{u,S_1,S_2}}\bm{1}},
\end{equation*}
and
\begin{equation*}
r^0(n)=1-(r^1(n)+r^2(n))	
\end{equation*}

where the first summation in the first equation is taken over all the subsets of components $S_1$ and the second summation is taken over all the subsets of components $S_2$ whose sizes are greater than or equal $k_2$, and the first summation in the second equation is taken over all the subsets of components $S_1$ whose sizes are greater than or equal $k_1$, and the second summation is taken over all the subsets of components $S_2$ whose sizes are less than $k_2$.
\label{pro:dec} 
\end{pro}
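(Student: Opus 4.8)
The plan is to mirror the proof of Proposition \ref{pro:inc}, now applied to the bivariate generating function $\Gamma(t_1,t_2)$ furnished by Theorem \ref{pgf2}. The first step is to note that each factor $\bm{H}_c(t_1,t_2)$ has a very restricted dependence on $(t_1,t_2)$: its first column is constant, its second column is proportional to $t_1$, and its third column is proportional to $t_1t_2$. Consequently one may write $\bm{H}_c(t_1,t_2)=\bm{H}_c(0,0)+\bigl(\bm{H}_c(1,0)-\bm{H}_c(0,0)\bigr)t_1+\bigl(\bm{H}_c(1,1)-\bm{H}_c(1,0)\bigr)t_1t_2$, i.e.\ a sum of a degree-$(0,0)$ term, a degree-$(1,0)$ term, and a degree-$(1,1)$ term; in particular no monomial $t_2$ with vanishing power of $t_1$ ever occurs (which reflects $N_{n,2}\le N_{n,1}$).

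Next I would expand the matrix product $\prod_{c=1}^{n}\bm{H}_c(t_1,t_2)$ by multilinearity: a generic term corresponds to choosing, for each index $c$, one of the three summands above. Encode such a choice by the pair of sets $S_1=\{\,c:\text{the chosen summand carries a factor }t_1\,\}$ and $S_2=\{\,c:\text{the chosen summand carries a factor }t_1t_2\,\}$. Then $S_2\subseteq S_1$ automatically, the resulting monomial is $t_1^{|S_1|}t_2^{|S_2|}$, and the matrix coefficient is precisely $\prod_{u=1}^{n}\bm{G}_{u,S_1,S_2}$ of (\ref{GuSU:dec}): the admissible local choices $u\notin S_1$, $u\in S_1\setminus S_2$, $u\in S_1\cap S_2$ reproduce the first, second, and fourth cases of that definition. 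The third case, $u\notin S_1$ but $u\in S_2$, is a genuine impossibility; inserting the zero matrix there forces $\prod_{u}\bm{G}_{u,S_1,S_2}=\bm{0}$ whenever $S_2\not\subseteq S_1$, which is exactly what permits running the double sum over \emph{all} pairs $(S_1,S_2)$ with no containment constraint, the spurious terms cancelling by themselves. Pre-multiplying by $\bm{\bar{1}}$, post-multiplying by $\bm{1}$, and comparing with (\ref{pgf:dec}) then gives $Pr\{N_{n,1}=x,\,N_{n,2}=y\}=\sum_{|S_1|=x}\sum_{|S_2|=y}\bm{\bar{1}}\prod_{u=1}^{n}\bm{G}_{u,S_1,S_2}\bm{1}$.

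Finally I would sum these joint probabilities over the ranges dictated by (\ref{r1:dec}) and (\ref{r2:dec}): for $r^2(n)$ one needs $y\ge k_2$ with $x$ unrestricted --- equivalently $|S_1|\le n$, which is no restriction --- yielding the first displayed formula; for $r^1(n)$ one needs $x\ge k_1$ and $y<k_2$, yielding the second; and $r^0(n)=1-r^1(n)-r^2(n)$ because the system occupies exactly one of the states $0,1,2$. I do not expect a real obstacle here. The only point demanding care is the bookkeeping in the expansion: correctly matching the four cases of $\bm{G}_{u,S_1,S_2}$ to the three local choices, verifying that the zero-matrix case annihilates precisely the pairs with $S_2\not\subseteq S_1$, and observing that the index of $N_{n,1}$ ranges over $0,1,\dots,n$ so that the condition $|S_1|\le n$ in the first formula is vacuous.
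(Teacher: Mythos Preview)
Your proposal is correct and follows essentially the same approach as the paper: decompose each $\bm{H}_c(t_1,t_2)$ as $\bm{H}_c(0,0)+(\bm{H}_c(1,0)-\bm{H}_c(0,0))t_1+(\bm{H}_c(1,1)-\bm{H}_c(1,0))t_1t_2$, expand the product, match the resulting terms to the cases of (\ref{GuSU:dec}), and read off the appropriate coefficient sums from (\ref{r1:dec})--(\ref{pgf:dec}). The paper's proof is terser and leaves implicit the bookkeeping you spell out (in particular the role of the zero-matrix case in annihilating pairs with $S_2\not\subseteq S_1$), but the logic is identical.
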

\begin{proof}[\textbf{Proof}]
By equations (\ref{r1:dec}), (\ref{r2:dec}), and (\ref{pgf:dec}), $r^1(n)$ is the sum of coefficients of $t_1^xt_2^y$ for $x\geq k_1$ and $y<k_2$,  and $r^2(n)$ is the sum of coefficients of $t_1^xt_2^y$ for $x\geq 0$ and $y\geq k_2$ in $$\Gamma(t_1,t_2)=\sum_{x=0}^{n}{}\sum_{y=0}^{n}{Pr\left\{N_{n,1}=x,N_{n,2}=y\right\}t_1^xt_2^y}$$ 
Note that $\bm{H}_u(t_1,t_2)=\bm{H}_u(0,0)+\left(\bm{H}_u(1,0)-\bm{H}_u(0,0)\right)t_1+\left(\bm{H}_u(1,1)-\bm{H}_u(1,0)\right)t_1t_2.$ \\By Theorem \ref{th:dec} and the definition of $\bm{G}_{u,S,U}$ in equation (\ref{GuSU:dec}), the expression of $r^m(n)$, $m=1,2$ follows.
\end{proof}
\begin{remarque}
If the system components are homogeneous Markov dependent, that is $p_u^{l_{u-1}l_u}=p^{l_{u-1}l_u}$ for all $u=1,2,...,n$. Then, matrix $\bm{H}_u(t_1,t_2)=\bm{H}(t_1,t_2)$ for all $u$. Thus, the system reliability is specified by Proposition \ref{pro:dec} for the case of homogeneous Markov dependent components.
\end{remarque}
\end{subsection}
\end{section}
\begin{section}{Numerical examples}
\newtheorem{exemple}{Example} 
\begin{exemple}
Consider a three-state $k$-out-of-$3:G$ system with $k_1=2$ and $k_2=3$. Assume that the conditional probabilities of the three non-homogeneous Markov dependent components is\\ \\
$\left(p_1^{22},p_2^{22},p_3^{22}\right)=\left(0.6,\; 0.55, \; 0.55\right),\; \left(p_1^{12},p_2^{12},p_3^{12}\right)=\left(0.3, 0.25, 0.25\right),\; \mbox{and}\; \left(p_1^{02},p_2^{02},p_3^{02}\right)=$\\$\left(0.30, \; 0.35,\;0.25 \right)$ \\ 
$\left(p_1^{21},p_2^{21},p_3^{21}\right)=\left(0.3,\; 0.35, \; 0.3\right),\; \left(p_1^{11},p_2^{11},p_3^{11}\right)=\left(0.5,\; 0.5, \; 0.55\right), \; \mbox{and}\; \left(p_1^{01},p_2^{01},p_3^{01}\right)=$\\$\left(0.4,\; 0.45, \; 0.5\right)$\\ \\
Then, matrix $$\bm{H}_c^j(t)=\left(\begin{array}{lll}
p_c^{00}  & p_c^{01}t^{\mathcal{I}_j(1)}  & p_c^{02}t^{\mathcal{I}_j(2)}  \\
p_c^{10}  & p_c^{11}t^{\mathcal{I}_j(1)}  & p_c^{12}t^{\mathcal{I}_j(2)}  \\
p_c^{20}  & p_c^{21}t^{\mathcal{I}_j(1)}  & p_c^{22}t^{\mathcal{I}_j(2)}  \\
\end{array}\right)
,\; \mbox{for}\; c=1,2,3,\; \mbox{and} \; j=1,2.$$
By Theorem \ref{th:inc}, $\bm{\Psi}_{3,j}(t)=\bm{\bar{1}}\prod_{c=1}^{3}\bm{H}_{c}^j(t)\bm{1}$, $j=1,2$. Then
\begin{eqnarray*}
\bm{\Psi}_{3,1}(t)&=&	0.0050+0.63225t^3+0.29975t^2+0.06300t\\
\bm{\Psi}_{3,2}(t)&=&0.21750+0.18150t^3+0.27650t^2+0.32450t	
\end{eqnarray*}
Thus,
\begin{equation*}
	R^1(3)=0.29975+0.63225=0.93200
\end{equation*}
\begin{equation*}
	R^2(3)=0.18150
\end{equation*}
Alternatively, by Proposition \ref{pro:inc},
\begin{eqnarray*}
	R^1(3)&=&\sum_{S:2\leq|S|\leq 3}{\bm{\bar{1}}\prod_{u=1}^{3}{\bm{G}_{u,S}^1}\bm{1}}\\
	      &=&\sum_{S:|S|=2}{\bm{\bar{1}}\prod_{u=1}^{3}{\bm{G}_{u,S}^1}\bm{1}}+\sum_{S:|S|=3}{\bm{\bar{1}}\prod_{u=1}^{3}{\bm{G}_{u,S}^1}\bm{1}} \\
				&=&\sum_{i=1}^{3}{\prod_{u=1}^{3-i}{\bm{H}_u^1(0)\left(\bm{H}_i^1(1)-\bm{H}_i^1(0)\right)}\prod_{s=n-i+2}^{3}{\bm{H}_s^1(0)}}+\bm{\bar{1}}\prod_{u=1}^{3}{\left(\bm{H}_u^1(1)-\bm{H}_u^1(0)\right)}\bm{1}\\
				&=&0.29975+0.63225=0.93200\\
	R^2(3)&=&\sum_{S:|S|=3}{\bm{\bar{1}}\prod_{u=1}^{3}{\bm{G}_{u,S}^2}\bm{1}} \\
	      &=&\bm{\bar{1}}\prod_{u=1}^{3}{\left(\bm{H}_u^2(1)-\bm{H}_u^2(0)\right)}\bm{1}\\
				&=&0.18150
\end{eqnarray*}
Finally, 
\begin{eqnarray*}
r^2(3)&=&R^2(3)=0.18150\\
r^1(3)&=&R^1(3)-R^2(3)=0.93200-0.18150=0.75050\\
r^0(3)&=&1-R^1(3)=1-0.93200=0.06800
\end{eqnarray*}
\end{exemple}
\begin{exemple}
Consider a three-state $k$-out-of-$3:G$ system with $k_1=3$, and $k_2=1$. Assume that the conditional probabilities are the same as in Example $1$.\\
Then, the corresponding matrix $\bm{H}_c(t_1,t_2)$ is  
$$\bm{H}_c(t_1,t_2)=\left(\begin{array}{lll}
p_c^{00}  & p_c^{01}t_1  & p_c^{02}t_1t_2  \\
p_c^{10}  & p_c^{11}t_1  & p_c^{12}t_1t_2  \\
p_c^{20}  & p_c^{21}t_1  & p_c^{22}t_1t_2  \\
\end{array}\right)
,\; \mbox{for}\; c=1,2,3.$$
By Theorem \ref{th:dec}, $\Gamma(t_1,t_2)=\bm{\bar{1}}\prod_{c=1}^{3}\bm{H}_{c}(t_1,t_2)\bm{1}$. Then
\begin{eqnarray*}
\Gamma(t_1,t_2)&=&0.0050+0.18150t_2^3t_1^3+0.19275t_2^2t_1^3+0.17550t_2t_1^3+0.0825t_1^3+0.08375t_1^2t_2^2+\\
&&0.12375t_1^2t_2+0.09225t_1^2+0.03775t_1+0.02525t_1t_2
\end{eqnarray*}
Thus,
\begin{eqnarray*}
	r^2(3)&=&0.18150+0.1927+0.08375=0.45795.\\
	r^1(3)&=&0.17550+0.0825=0.25800.\\
	r^0(3)&=&1-(r^2(3)+r^1(3))=1-(0.45795+0.25800)=0.28405.
\end{eqnarray*}
Alternatively, by Proposition \ref{pro:dec}, for example for $j=1$
\begin{eqnarray*}
	r^1(3)&=&\sum_{S_1:|S_1|\geq 3}{}\sum_{S_2:|S_2|<2}{\bm{\bar{1}}\prod_{u=1}^{3}{\bm{G}_{u,S_1,S_2}}\bm{1}}\\
        &=&\sum_{S_1:|S_1|= 3}{}\sum_{S_2:|S_2|=1}{\bm{\bar{1}}\prod_{u=1}^{3}{\bm{G}_{u,S_1,S_2}}\bm{1}}+\sum_{S_1:|S_1|= 3}{}\sum_{S_2:|S_2|=0}{\bm{\bar{1}}\prod_{u=1}^{3}{\bm{G}_{u,S_1,S_2}}\bm{1}}\\
				&=&\sum_{i=1}^{3}{}\bm{\bar{1}}\prod_{u=1}^{i-1}{\left(\bm{H}_u(1,0)-\bm{H}_u(0,0)\right)}\left(\bm{H}_i(1,1)-\bm{H}_i(1,0)\right)\prod_{s=i+1}^{3}{\left(\bm{H}_s(1,0)-\bm{H}_s(0,0)\right)}\bm{1}\\
				& &+\bm{\bar{1}}\prod_{u=1}^{3}{\left(\bm{H}_u(1,0)-\bm{H}_u(0,0)\right)}\bm{1}\\
				&=&0.0825+0.17550\\
				&=&0.25800.			
\end{eqnarray*}

In Table 1, we present the state distributions of a three-state $k$-out-of-$n:G$ system for different values of $k_1,k_2$ and $n$, with components having the conditional probabilities:
\begin{center}
\begin{tabular}{llll}
$\left(p_c^{21},p_c^{22}\right)=\left(0.3,0.6\right)$&for $c=1,2,...,15$ &$\left(p_c^{21},p_c^{22}\right)=\left(0.3,0.65\right)$   &for $c=16,17,...,20$  \\
$\left(p_c^{11},p_c^{12}\right)=\left(0.5,0.35\right)$&for $c=1,2,...,15$ &$\left(p_c^{11},p_c^{12}\right)=\left(0.45,0.45\right)$  &for $c=16,17,...,20$\\
$\left(p_c^{01},p_c^{02}\right)=\left(0.45,0.3\right)$&for $c=1,2,...,5$ &$\left(p_c^{01},p_c^{02}\right)=\left(0.55,0.3\right)$  &for $c=6,7,...,15$ \\
$\left(p_c^{01},p_c^{02}\right)=\left(0.55,0.25\right)$&for $c=16,17,...,20$& &\\
\end{tabular}
\end{center}

\renewcommand{\tablename}{\textbf{Table}}
\begin{table}[h]
\begin{center}
\caption{State distributions of a three-state $k$-out-of-$n:G$ system for different values of $k_1,k_2$ and $n$}
\begin{tabular}{cccccccc}
\hline
\hline
$n$     & $k_1$& $k_2$& $r^0(n)$   & $r^1(n)$    & $r^2(n)$   & $R^1(n)$   & $R^2(n)$ \\
\hline
$10$    &  4   &  3   &0.0002071763&0.13342191280&0.8663709109&0.9997928237&0.8663709109\\
        &  5   &  3   &0.0013698082&0.13225928090&0.8663709109&0.9986301918&0.8663709109 \\
				&  6   &  4   &0.0084395255&0.26531485150&0.7262456230&0.9915604745&0.7262456230\\
				&  6   &  5   &0.0094690450&0.44387722540&0.5466537296&0.9905309550&0.5466537296\\
\hline			
$15$    &  5   &  4   &0.0000010609&0.07440229886&0.9255966402&0.9999989391&0.9255966402 \\
        &  7   &  5   &0.0000831322&0.15466683570&0.8452500321&0.9999168678&0.8452500321\\
				&  8   &  6   &0.0005412759&0.27127122260&0.7281875015&0.9994587241&0.7281875015 \\
				&  8   &  7   &0.0005575429&0.41640144220&0.5830410149&0.9994424571&0.5830410149 \\
\hline				
$20$    &  7   &  6   &0.0000000783&0.06870243220&0.9312974895&0.9999999217&0.9312974895\\
        &  9   &  7   &0.0000046993&0.13104703960&0.8689482611&0.9999953007&0.8689482611\\
				&  10  &  9   &0.0000293583&0.33736341170&0.6626072300&0.9999706417&0.6626072300\\
				&  12  &  10  &0.0007354415&0.46896212730&0.5303024312&0.9992645585&0.5303024312\\
				&  15  &  10  &0.0309837102&0.43871385880&0.5303024312&0.9690162900&0.5303024312\\
\hline
\hline
\end{tabular}
\end{center}
\end{table}

\end{exemple} 
\end{section} 
\begin{section}{Conclusion}
In this paper, we have studied the reliability of a three-state $k$-out-of-$n:G$ system with non-homogeneous Markov dependent components. We have developed  a closed-form formula to evaluate the state distributions of an increasing (decreasing) three-state $k$-out-of-$n:G$ system, respectively, using the probability generating function method. We have presented two numerical examples to demonstrate the use of the formula. Because every multi-state  $k$-out-of-$n:F$system has a corresponding multi-state $k$-out-of-$n:G$ system, all results obtained in this paper can be  easily extended to a three-state $k$-out-of-$n:F$ system.  As a future work, the multi-state $k$-out-of-$n:G$ system with components having more than three-state can be studied by the same methodology.
\end{section}

\end{document}